\theoremstyle{plain}
\newtheorem{thm}{Theorem}[section]
\newtheorem{lem}[thm]{Lemma}
\newtheorem{prop}[thm]{Proposition}
\theoremstyle{definition}
\theoremstyle{remark}
\begin{document}

\title {\bf Note on the H\"{o}lder norm estimate of the function $x\sin(1/x)$}
\author{\it Jiaqiang Mei\thanks{Project supported by NSFC(Grant No. 11171143).}, Haifeng Xu\thanks{The second author is supported by the foundation of Yangzhou University 2013CXJ006 and the Natural Science Foundation of Jiangsu Province 14KJB110027.}}
\date{\small\today}

\maketitle

\begin{abstract}
In this paper, we prove the following inequality: for any $x, y>0$, there holds
$$\big|x\sin\frac{1}{x} - y\sin\frac{1}{y} \big| \leq \sqrt{2|x - y|}.$$
\end{abstract}

\noindent{\bf MSC2010:} 26D20.\\
{\bf Keywords:} Wirtinger's inequality; H\"{o}lder norm.



Let
\[
f(x) = 
\begin{cases}
x\sin\frac{1}{x}, & x\neq 0, \\
0, & x=0.
\end{cases}
\]
It is well-known that this function $f$ is H\"{o}lder continuous with H\"{o}lder exponent $1/2$
(but not of any higher H\"{o}lder exponent, see for example [1]). For $\alpha\in(0, 1/2]$, 
the H\"{o}lder norm of $f$ is defined as
\[
|f|_{C^{0,\alpha}} = \sup_{x \neq y \in \mathbb{R}} \frac{| f(x) - f(y) |}{|x-y|^\alpha}.
\]
In this paper, we investigate the H\"{o}lder norm and provide the following estimate 
\[
|f|_{C^{0,1/2}} \leq \sqrt{2}.
\]

Since $f$ is an even function, in the sequel we will assume $x, y>0$. The method we shall 
use to prove the above estimate is rather elementary, however, it turns out that the argument is a little bit delicate
in several situations. Our argument roughly runs as follows: firstly we investigate the 
monotonicity property of $f$ on $(0, \infty)$. It is easy to show that $(0, \infty)$ is divided
by a sequence of consecutive intervals, each of which contains exactly one inflection point
of the form $\frac{1}{n\pi}$. Secondly we study the H\"{o}lder continuity property of $f$ near
$0$. We use certain integrals to deal with the oscillation phenomenon of $f$ near $0$. Although the H\"{o}lder
norm near infinity is not difficult to estimate, there are two intermediate intervals in which
more delicate analysis will be involved.

\section{Preliminaries}

Let $\varphi(t)=\sin t-t\cos t$, $t\in(0,\infty)$. It's easy to see that for each $n\geqslant 1$, there is only one solution $\alpha_n\in(n\pi,n\pi+\frac{\pi}{2})$ for the equation $\varphi(t)=0$. Let $\alpha_n=n\pi+\frac{\pi}{2}-\theta_n$, $\theta_n\in(0,\frac{\pi}{2})$, we have

\begin{lem}\label{lem:1.1}
For each $n\geqslant 1$, the following estimates hold for $\theta_n$,
\begin{eqnarray}
& &\theta_n<\frac{1}{\alpha_n}<\frac{1}{n\pi},\label{eqn:1.1}\\
& &\theta_n<\frac{1}{n\pi+\frac{\pi}{2}}(1+\theta_n^2),\label{eqn:1.2}\\
& &\theta_n<\frac{2n+1}{4}\pi-\sqrt{(\frac{2n+1}{4}\pi)^2-1}.\label{eqn:1.3}
\end{eqnarray}
\end{lem}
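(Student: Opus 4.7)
The plan is to base all three inequalities on a single trigonometric identity for $\theta_n$. Starting from $\varphi(\alpha_n) = \sin\alpha_n - \alpha_n\cos\alpha_n = 0$ and substituting $\alpha_n = n\pi + \tfrac{\pi}{2} - \theta_n$, the angle-addition formulas give
\[
\sin\alpha_n = (-1)^n\cos\theta_n, \qquad \cos\alpha_n = (-1)^n\sin\theta_n,
\]
so the defining equation $\varphi(\alpha_n)=0$ reduces to
\[
\tan\theta_n = \frac{1}{\alpha_n}.
\]
This is the only nontrivial input; everything else is algebra driven by this identity together with $\theta_n \in (0,\pi/2)$ and $\alpha_n > n\pi$.

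For \eqref{eqn:1.1} I would invoke the elementary inequality $\tan t > t$ for $t\in(0,\pi/2)$ applied to $\theta_n$: this immediately gives $\theta_n < \tan\theta_n = 1/\alpha_n$, and the second half $1/\alpha_n < 1/(n\pi)$ follows from $\alpha_n > n\pi$. For \eqref{eqn:1.2} I would multiply through by $n\pi + \tfrac{\pi}{2}$ and observe that the claim is equivalent to
\[
\theta_n\!\left(n\pi + \tfrac{\pi}{2} - \theta_n\right) = \theta_n\,\alpha_n < 1,
\]
which is an immediate consequence of \eqref{eqn:1.1}.

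For \eqref{eqn:1.3} the trick is to recognize it as a quadratic reformulation of \eqref{eqn:1.2}. Setting $a = \tfrac{2n+1}{4}\pi$, so that $2a = n\pi + \tfrac{\pi}{2}$, inequality \eqref{eqn:1.2} becomes
\[
\theta_n^2 - 2a\,\theta_n + 1 > 0.
\]
For $n\geq 1$ one has $a \geq \tfrac{3\pi}{4} > 1$, so the quadratic $x^2 - 2ax + 1$ has two real positive roots $a \pm \sqrt{a^2-1}$ and is positive precisely outside the interval they bound. Since $\theta_n < \pi/2 < a < a+\sqrt{a^2-1}$, $\theta_n$ cannot exceed the larger root and therefore must lie below the smaller one, which is exactly \eqref{eqn:1.3}.

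The main obstacle, insofar as there is one, is simply spotting the equivalence between \eqref{eqn:1.3} and a rearranged form of \eqref{eqn:1.2}: once the identity $\tan\theta_n = 1/\alpha_n$ is in hand, the whole lemma collapses to a short chain of elementary manipulations, with \eqref{eqn:1.1}~$\Rightarrow$~\eqref{eqn:1.2}~$\Rightarrow$~\eqref{eqn:1.3}.
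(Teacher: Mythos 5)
Your proposal is correct and follows essentially the same route as the paper: both reduce everything to the identity $\tan\theta_n = 1/\alpha_n$ (equivalently $1 = \alpha_n\tan\theta_n > \alpha_n\theta_n$), from which \eqref{eqn:1.1} is immediate and \eqref{eqn:1.2}, \eqref{eqn:1.3} follow by substituting $\alpha_n = n\pi + \tfrac{\pi}{2} - \theta_n$ and solving the resulting quadratic inequality. The only difference is that you spell out the quadratic-root argument for \eqref{eqn:1.3}, which the paper leaves implicit.
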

\begin{proof}
Since $\alpha_n$ is a solution for the equation $\varphi(t)=0$, we have
\[
\sin\alpha_n=\alpha_n\cdot\cos\alpha_n.
\]
Using $\alpha_n=n\pi+\frac{\pi}{2}-\theta_n$, we have
\begin{equation}\label{eqn:1.4}
1=\alpha_n\cdot\tan\theta_n > \alpha_n\cdot\theta_n,
\end{equation}
which yields \eqref{eqn:1.1}. Substitute $\alpha_n=n\pi+\frac{\pi}{2}-\theta_n$ in the above inequality, one gets \eqref{eqn:1.2} and \eqref{eqn:1.3}.
\end{proof}

{\bf Remark}. For $\theta_1$, it will be more convenient to use the deduced estimate $\theta_1< \frac{\pi}{14}$.

\begin{lem}\label{lem:1.2}
For each $n\geqslant 1$, we have the following estimate
\[
\theta_n>\sin\theta_n>\frac{1}{n\pi+\frac{\pi}{2}}.
\]
\end{lem}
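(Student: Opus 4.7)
The plan is to convert the defining relation for $\theta_n$ into an equation for $\sin\theta_n$, then reduce both inequalities to elementary facts about trigonometric functions combined with the bound on $\theta_n$ already obtained in Lemma \ref{lem:1.1}.

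First, the left inequality $\theta_n > \sin\theta_n$ is immediate: by definition $\theta_n \in (0, \pi/2)$, and $t > \sin t$ for all $t \in (0, \pi/2)$.

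For the right inequality, I would start from \eqref{eqn:1.4}, which says $\tan\theta_n = 1/\alpha_n$. Writing this as $\sin\theta_n = \cos\theta_n / \alpha_n$ and recalling $\alpha_n = n\pi + \pi/2 - \theta_n$, the desired estimate $\sin\theta_n > 1/(n\pi + \pi/2)$ is equivalent to
\[
\cos\theta_n \cdot \bigl(n\pi + \tfrac{\pi}{2}\bigr) > n\pi + \tfrac{\pi}{2} - \theta_n,
\]
or rearranged,
\[
\bigl(n\pi + \tfrac{\pi}{2}\bigr)\bigl(1 - \cos\theta_n\bigr) < \theta_n.
\]
Using the standard bound $1 - \cos\theta_n = 2\sin^2(\theta_n/2) < \theta_n^2/2$ valid for all $\theta_n > 0$, it suffices to verify
\[
\bigl(n\pi + \tfrac{\pi}{2}\bigr)\frac{\theta_n^2}{2} < \theta_n, \qquad\text{i.e.,}\qquad \theta_n < \frac{2}{n\pi + \pi/2}.
\]

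Finally, I would invoke \eqref{eqn:1.1}, which gives $\theta_n < 1/(n\pi)$. For $n \geq 1$ one has $n\pi + \pi/2 \leq 2n\pi$, so $1/(n\pi) \leq 2/(n\pi + \pi/2)$, completing the chain. The argument is almost entirely routine; the only mild subtlety is choosing the right quadratic bound on $1 - \cos\theta_n$ so that the resulting condition on $\theta_n$ is exactly what \eqref{eqn:1.1} supplies.
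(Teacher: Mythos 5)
Your proof is correct, but it takes a genuinely different route from the paper's. The paper never touches the identity $\tan\theta_n=1/\alpha_n$ in this lemma; instead it constructs a test point $\beta_n=n\pi+\frac{\pi}{2}-\eta_n$ with $\sin\eta_n=\frac{1}{n\pi+\pi/2}$, checks by a short computation that $(-1)^n\varphi(\beta_n)>0$, and concludes $\beta_n>\alpha_n$ (hence $\theta_n>\eta_n$) from the monotonicity of $(-1)^n\varphi$ on $(n\pi,n\pi+\frac{\pi}{2})$ and the uniqueness of the zero $\alpha_n$. That argument is self-contained and does not rely on Lemma \ref{lem:1.1}. You instead rewrite the target inequality, via $\sin\theta_n=\cos\theta_n/\alpha_n$, as $(n\pi+\frac{\pi}{2})(1-\cos\theta_n)<\theta_n$, bound $1-\cos\theta_n<\theta_n^2/2$, and close the loop with the upper bound $\theta_n<\frac{1}{n\pi}\leqslant\frac{2}{n\pi+\pi/2}$ from \eqref{eqn:1.1}; all steps check out, including the strictness (the bound on $1-\cos\theta_n$ is strict, so the non-strict inequality $\frac{1}{n\pi}\leqslant\frac{2}{n\pi+\pi/2}$ suffices). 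The trade-off: your version is a purely algebraic squeeze that makes the dependence on Lemma \ref{lem:1.1} explicit, while the paper's comparison argument is independent of the earlier lemma and yields the same conclusion in the sharper-looking form $\theta_n>\arcsin\frac{1}{n\pi+\pi/2}$ (its equation \eqref{eqn:1.4-5}), which is of course equivalent to $\sin\theta_n>\frac{1}{n\pi+\pi/2}$ since $\sin$ is increasing on $(0,\frac{\pi}{2})$.
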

\begin{proof}
Let $\beta_n=n\pi+\frac{\pi}{2}-\eta_n$, where $\eta_n\in(0,\frac{\pi}{2})$ satisfies
\[
\sin\eta_n=\frac{1}{n\pi+\frac{\pi}{2}}.
\]
We have
\[
\begin{split}
(-1)^n\cdot\varphi(\beta_n)&=\cos\eta_n-(n\pi+\frac{\pi}{2}-\eta_n)\cdot\sin\eta_n\\
&=\cos\eta_n+\eta_n\cdot\sin\eta_n-1\\
&>\cos\eta_n+\sin^2\eta_n-1\\
&=\cos\eta_n-\cos^2\eta_n>0.
\end{split}
\]
Since $(-1)^n\cdot\varphi(t)$ is monotonically decreasing in $(n\pi,n\pi+\frac{\pi}{2})$ and $\alpha_n$ is the only solution for the equation $\varphi(t)=0$, we have $\beta_n>\alpha_n$. Thus
\[
\theta_n>\eta_n,\quad\sin\theta_n>\sin\eta_n=\frac{1}{n\pi+\frac{\pi}{2}}.
\]
More precisely, we have
\begin{equation}\label{eqn:1.4-5}
\theta_n>\arcsin\frac{1}{n\pi+\frac{\pi}{2}}.
\end{equation}
\end{proof}

\begin{lem}\label{lem:1.3}
For each $n\geqslant 1$, we have the following estimate
\begin{equation}\label{eqn:1.5}
0<\theta_n-\theta_{n+1}<\frac{\pi}{\alpha_n\cdot\alpha_{n+1}}.
\end{equation}
\end{lem}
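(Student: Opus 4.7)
The plan is to proceed from the identity $\alpha_n \tan\theta_n = 1$, which was established in equation (1.4) during the proof of Lemma 1.1. This rewrites as $\tan\theta_n = 1/\alpha_n$, and since $\theta_n \in (0, \pi/2)$ we may equivalently write $\theta_n = \arctan(1/\alpha_n)$.

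First I would verify the lower bound $\theta_n - \theta_{n+1} > 0$. Since $\alpha_n \in (n\pi, n\pi + \pi/2)$, the sequence $\alpha_n$ is strictly increasing in $n$ (because $\alpha_n < n\pi + \pi/2 < (n+1)\pi < \alpha_{n+1}$). Hence $\tan\theta_n = 1/\alpha_n$ is strictly decreasing, and since all $\theta_n$ lie in $(0, \pi/2)$ where $\tan$ is monotonic, it follows that $\theta_n > \theta_{n+1}$.

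For the upper bound, the key step is to apply the tangent subtraction formula:
\[
\tan(\theta_n - \theta_{n+1}) = \frac{\tan\theta_n - \tan\theta_{n+1}}{1 + \tan\theta_n \tan\theta_{n+1}} = \frac{1/\alpha_n - 1/\alpha_{n+1}}{1 + 1/(\alpha_n\alpha_{n+1})} = \frac{\alpha_{n+1} - \alpha_n}{\alpha_n\alpha_{n+1} + 1}.
\]
Because $\theta_n - \theta_{n+1}$ lies in $(0, \pi/2)$ and $x < \tan x$ on that interval, I obtain
\[
\theta_n - \theta_{n+1} < \frac{\alpha_{n+1} - \alpha_n}{\alpha_n\alpha_{n+1} + 1}.
\]

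The punchline — and what I expect to be the cleanest but most easily missed step — is to eliminate $\alpha_{n+1} - \alpha_n$ using the defining relation $\alpha_m = m\pi + \pi/2 - \theta_m$, which yields $\alpha_{n+1} - \alpha_n = \pi + (\theta_n - \theta_{n+1})$. Writing $d := \theta_n - \theta_{n+1}$, the previous inequality becomes $d(\alpha_n\alpha_{n+1} + 1) < \pi + d$, and the $d$ on both sides cancels to give exactly $d \cdot \alpha_n\alpha_{n+1} < \pi$, i.e., the desired bound \eqref{eqn:1.5}. No additional estimation is required; the algebra works out precisely because the "$+1$" in the denominator of the tangent formula absorbs the "$+d$" coming from the difference $\alpha_{n+1} - \alpha_n$.
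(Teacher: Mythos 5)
Your proposal is correct and follows essentially the same route as the paper: monotonicity of $\tan\theta_n=1/\alpha_n$ for the lower bound, the tangent subtraction formula combined with $\alpha_{n+1}-\alpha_n=\pi+\theta_n-\theta_{n+1}$ and the inequality $d<\tan d$ for the upper bound. You have merely written out explicitly the final cancellation that the paper leaves to the reader.
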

\begin{proof}
By \eqref{eqn:1.4} we have
\[
\tan\theta_n=\frac{1}{\alpha_n}>\frac{1}{\alpha_{n+1}}=\tan\theta_{n+1},
\]
which yields $\theta_n>\theta_{n+1}$.

On the other hand,
\[
\begin{split}
\tan(\theta_n-\theta_{n+1})&=\frac{\tan\theta_n-\tan\theta_{n+1}}{1+\tan\theta_n\cdot\tan\theta_{n+1}}=\frac{\frac{1}{\alpha_n}-\frac{1}{\alpha_{n+1}}}{1+\frac{1}{\alpha_n\cdot\alpha_{n+1}}}\\
&=\frac{\alpha_{n+1}-\alpha_n}{1+\alpha_n\cdot\alpha_{n+1}}=\frac{\pi+\theta_n-\theta_{n+1}}{1+\alpha_n\cdot\alpha_{n+1}}.
\end{split}
\]
By using $\theta_n-\theta_{n+1}<\tan(\theta_n-\theta_{n+1})$ we can deduce the right hand side of \eqref{eqn:1.5}.
\end{proof}

\begin{lem}\label{lem:1.4}
For $n\geqslant 1$, we define the constant $C_n$ as follows
\[
C_n=\frac{(\alpha_{n+1}-\alpha_n)^2}{\pi^2\alpha_n^2\alpha_{n+1}^2}\biggl[\frac{1}{10}(\alpha_{n+1}^5-\alpha_n^5)+\frac{1}{4}(\alpha_{n+1}-\alpha_n)\Bigl(1+\frac{\alpha_{n+1}\alpha_n-1}{(1+\alpha_{n+1}^2)(1+\alpha_n^2)}\Bigr)\biggr].
\]
Then $C_n<2$ for each $n>1$ and $C_1<2.26$.
\end{lem}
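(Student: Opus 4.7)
The plan is to first rewrite $C_n$ in a more transparent form and then extract an explicit, monotonically decreasing upper bound. The first step is to factor
\[
\alpha_{n+1}^5-\alpha_n^5=(\alpha_{n+1}-\alpha_n)\bigl(\alpha_{n+1}^4+\alpha_{n+1}^3\alpha_n+\alpha_{n+1}^2\alpha_n^2+\alpha_{n+1}\alpha_n^3+\alpha_n^4\bigr),
\]
which absorbs one copy of $(\alpha_{n+1}-\alpha_n)$ into the bracket and combines with the prefactor to give $(\alpha_{n+1}-\alpha_n)^3$. Distributing the remaining $1/(\alpha_n^2\alpha_{n+1}^2)$ through, and writing $r_n:=\alpha_{n+1}/\alpha_n$, the formula becomes
\[
C_n=\frac{(\alpha_{n+1}-\alpha_n)^3}{\pi^2}\Biggl[\frac{1}{10}\bigl(r_n^2+r_n+1+r_n^{-1}+r_n^{-2}\bigr)+\frac{Q_n}{4\alpha_n^2\alpha_{n+1}^2}\Biggr],
\]
where $Q_n:=1+(\alpha_n\alpha_{n+1}-1)/[(1+\alpha_n^2)(1+\alpha_{n+1}^2)]\in(1,2)$. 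The function $r\mapsto r^2+r+1+r^{-1}+r^{-2}$ attains its minimum value $5$ at $r=1$, so when $r_n$ is near $1$ and $\alpha_{n+1}-\alpha_n$ is near $\pi$ we expect $C_n$ close to $\pi\cdot\tfrac12=\pi/2\approx 1.571$.

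To turn this heuristic into a rigorous bound I will feed in the two quantitative estimates already proved. Lemma \ref{lem:1.3} yields $\alpha_{n+1}-\alpha_n<\pi(1+1/(\alpha_n\alpha_{n+1}))$, while Lemma \ref{lem:1.1} gives $\alpha_n>n\pi+\pi/2-1/(n\pi)$; together they imply an explicit upper bound for $r_n-1=(\alpha_{n+1}-\alpha_n)/\alpha_n$ depending only on $n$. Substituting produces a closed-form upper bound $\Phi(n)$ for $C_n$ that is monotonically decreasing in $n$ and tends to $\pi/2$. It then suffices to verify $\Phi(2)<2$: one has $r_2<1.42$, so the square-bracket factor is below $0.57$, while $(\alpha_3-\alpha_2)^3/\pi^2<3.3$, giving $\Phi(2)<1.9$. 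Monotonicity of $\Phi$ then settles $C_n<2$ for all $n\geq 2$.

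The case $n=1$ requires a separate and more careful argument since $r_1\approx 1.72$ leaves the bracket factor far from $1/2$, and the target $2.26$ is essentially the true value of $C_1$. Here I would substitute the sharp estimate $\theta_1<\pi/14$ from the remark following Lemma \ref{lem:1.1}, combined with $\theta_2<1/(2\pi)$ from \eqref{eqn:1.1} (or its refinement via \eqref{eqn:1.2}), to obtain two-sided estimates for $\alpha_1$, $\alpha_2$, $\alpha_2-\alpha_1$, and $\alpha_1\alpha_2$. Direct substitution of these into the definition of $C_1$ then delivers $C_1<2.26$.

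The hard part will be precisely this $n=1$ case: since the asserted bound is nearly tight, every estimate must be as sharp as possible, and using a coarser bound on $\theta_1$ (say $\theta_1<1/\pi$) would already break the inequality. By contrast, the $n\geq 2$ case enjoys comfortable slack and is really just careful arithmetic followed by the monotonicity of $\Phi$.
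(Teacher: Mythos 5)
Your treatment of $n\geqslant 2$ is sound, and it is in substance the paper's own argument in different notation: since $r_n+r_n^{-1}=2+\delta_n^2/(\alpha_n\alpha_{n+1})$ with $\delta_n=\alpha_{n+1}-\alpha_n$, your bracket $\tfrac1{10}(r_n^2+r_n+1+r_n^{-1}+r_n^{-2})$ equals $\tfrac12\bigl[1+\delta_n^2/(\alpha_n\alpha_{n+1})+\tfrac15\delta_n^4/(\alpha_n\alpha_{n+1})^2\bigr]$, which is exactly the quantity the paper bounds, feeding in $\delta_n<\pi(1+1/(\alpha_n\alpha_{n+1}))$ from Lemma \ref{lem:1.3} and the uniform lower bound $\alpha_n\alpha_{n+1}\geqslant\alpha_2\alpha_3>84.22$. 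Your numerics for $\Phi(2)$ check out, and the monotonicity packaging is harmless.

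The genuine gap is in the $n=1$ case, precisely where you predicted trouble: the toolkit you propose there cannot produce a sharp enough upper bound for $\delta_1=\alpha_2-\alpha_1=\pi+\theta_1-\theta_2$. Numerically $\alpha_1\approx 4.49341$, $\alpha_2\approx 7.72525$, $\delta_1\approx 3.23184$ and $C_1\approx 2.2564$, so the target $2.26$ leaves a relative margin of only about $0.16\%$, while $C_1$ carries the factor $\delta_1^3$. Bounding $\theta_1$ from above and $\theta_2$ from below \emph{separately}, the best the paper's lemmas allow is $\delta_1<\pi+\tan\theta_1-\arcsin\frac{2}{5\pi}\approx 3.2365$ (with your stated $\theta_1<\pi/14$ it is $\approx 3.2383$); the resulting inflation of $\delta_1^3$ is at least $(3.2365/3.2318)^3\approx 1.0043$, which already pushes the bound above $2.266$ (above $2.27$ with $\pi/14$). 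The obstruction is structural: any individual upper bound on $\theta_1$ must exceed $\tan\theta_1-\theta_1\approx\theta_1^3/3\approx 0.0035$, and that alone eats the entire margin. What saves the paper is that Lemma \ref{lem:1.3} bounds the \emph{difference} directly, $\theta_1-\theta_2<\pi/(\alpha_1\alpha_2)<\pi/34.6\approx 0.0908$ against the true value $0.0903$, because the cubic error in $\tan(\theta_1-\theta_2)\approx\theta_1-\theta_2$ is only of order $(0.09)^3/3\approx 2.4\times 10^{-4}$. So for $n=1$ you must use $\delta_1<\pi(1+1/(\alpha_1\alpha_2))$ together with $\alpha_1\alpha_2>34.6$, not two-sided estimates on $\theta_1$ and $\theta_2$ individually; with that replacement your computation does close.
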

\begin{proof}
Let $\delta_n=\alpha_{n+1}-\alpha_n=\pi+\theta_n-\theta_{n+1}$. We have $\delta_n<\pi(1+\frac{1}{\alpha_n\alpha_{n+1}})$ according to Lemma \ref{lem:1.3} and
\begin{equation}\label{eqn:1.6}
\begin{split}
\alpha_{n+1}^5-\alpha_n^5&=(\alpha_n+\delta_n)^5-\alpha_n^5\\
&=5\alpha_n^4\delta_n+10\alpha_n^3\delta_n^2+10\alpha_n^2\delta_n^3+5\alpha_n\delta_n^4+\delta_n^5\\
&=5\alpha_n^2\alpha_{n+1}^2\delta_n+5\alpha_n\alpha_{n+1}\delta_n^3+\delta_n^5.
\end{split}
\end{equation}
Thus, if let
\[
G_n=\frac{(\alpha_{n+1}-\alpha_n)^2}{\pi^2\alpha_n^2\alpha_{n+1}^2}\cdot\frac{1}{10}(\alpha_{n+1}^5-\alpha_n^5),
\]
then,
\[
\begin{split}
G_n&=\frac{\delta_n^2}{\pi^2\alpha_n^2\alpha_{n+1}^2}\biggl[\frac{1}{10}(5\alpha_n^2\alpha_{n+1}^2\delta_n+5\alpha_n\alpha_{n+1}\delta_n^3+\delta_n^5)\biggr]\\
&=\frac{\delta_n^3}{2\pi^2}\biggl[1+\frac{\delta_n^2}{\alpha_n\alpha_{n+1}}+\frac{1}{5}\cdot\frac{\delta_n^4}{\alpha_n^2\alpha_{n+1}^2}\biggr].
\end{split}
\]
By Lemma \ref{lem:1.1}, we have the following estimate
\[
\alpha_1\alpha_2=(\frac{3\pi}{2}-\theta_1)(\frac{5\pi}{2}-\theta_2)>\frac{7}{2}\pi^2.
\]
Recall that $\alpha_n=n\pi+\frac{\pi}{2}-\theta_n$, where $\theta_n\in(0,\frac{\pi}{2})$. If $n>1$, then by Lemma \ref{lem:1.1}, we have
\[
\begin{split}
\alpha_n\alpha_{n+1}&\geqslant\alpha_2\alpha_3=(\frac{5\pi}{2}-\theta_2)(\frac{7\pi}{2}-\theta_3)\\
&>\biggl[\frac{5\pi}{2}-\Bigl(\frac{5\pi}{4}-\sqrt{(\frac{5\pi}{4})^2-1}\Bigr)\biggr]\cdot\biggl[\frac{7\pi}{2}-\Bigl(\frac{7\pi}{4}-\sqrt{(\frac{7\pi}{4})^2-1}\Bigr)\biggr]\\
&=\biggl[\frac{5\pi}{4}+\sqrt{(\frac{5\pi}{4})^2-1}\biggr]\cdot\biggl[\frac{7\pi}{4}+\sqrt{(\frac{7\pi}{4})^2-1}\biggr]\\
&>7.7245\cdot 10.9038\\
&>84.22.
\end{split}
\]
Then
\[
\frac{\delta_n^2}{\alpha_n\alpha_{n+1}}<\frac{\pi^2(1+\frac{1}{\alpha_n\alpha_{n+1}})^2}{\alpha_n\alpha_{n+1}}<\frac{\pi^2(1+\frac{1}{84.22})^2}{84.22}<0.12.
\]
Thus we can estimate  $G_n$ $(n>1)$ as follows
\[
\begin{split}
G_n&<\frac{1}{2\pi^2}\cdot\pi^3(1+\frac{1}{\alpha_n\alpha_{n+1}})^3\biggl[1+\frac{\delta_n^2}{\alpha_n\alpha_{n+1}}+\frac{1}{5}\cdot\frac{\delta_n^4}{\alpha_n^2\alpha_{n+1}^2}\biggr]\\
&<\frac{\pi}{2}(1+\frac{1}{84.22})^3\Bigl[1+0.12+\frac{1}{5}\cdot 0.12^2\Bigr]\\
&<1.83.
\end{split}
\]
Similarly, for $n=1$, we have
\[
\begin{split}
\alpha_1\alpha_2&=(\frac{3\pi}{2}-\theta_1)(\frac{5\pi}{2}-\theta_2)\\
&>\biggl[\frac{3\pi}{2}-\Bigl(\frac{3\pi}{4}-\sqrt{(\frac{3\pi}{4})^2-1}\Bigr)\biggr]\cdot\biggl[\frac{5\pi}{2}-\Bigl(\frac{5\pi}{4}-\sqrt{(\frac{5\pi}{4})^2-1}\Bigr)\biggr]\\
&=\biggl[\frac{3\pi}{4}+\sqrt{(\frac{3\pi}{4})^2-1}\biggr]\cdot\biggl[\frac{5\pi}{4}+\sqrt{(\frac{5\pi}{4})^2-1}\biggr]\\
&>4.4896\cdot 7.7245\\
&>34.6.
\end{split}
\]
Thus,
\[
\frac{\delta_1^2}{\alpha_1\alpha_2}<\frac{\pi^2(1+\frac{1}{\alpha_1\alpha_2})^2}{\alpha_1\alpha_2}<\frac{\pi^2}{34.6}(1+\frac{1}{34.6})^2<0.302.
\]
Then, we get the estimate of $G_1$
\[
\begin{split}
G_1&<\frac{1}{2\pi^2}\cdot\pi^3(1+\frac{1}{\alpha_1\alpha_2})^3\biggl[1+\frac{\delta_1^2}{\alpha_1\alpha_2}+\frac{1}{5}\cdot\frac{\delta_1^4}{\alpha_1^2\alpha_2^2}\biggr]\\
&<\frac{\pi}{2}(1+\frac{1}{34.6})^3\Bigl[1+0.302+\frac{1}{5}\cdot 0.302^2\Bigr]\\
&<2.259.
\end{split}
\]
If let
\[
F_n=1+\frac{\alpha_{n+1}\alpha_n-1}{(1+\alpha_{n+1}^2)(1+\alpha_n^2)},
\]
then
\[
\begin{split}
F_n&=1+\frac{\alpha_{n+1}\alpha_n-1}{1+\alpha_n^2+\alpha_{n+1}^2+\alpha_{n+1}^2\alpha_n^2}<1+\frac{\alpha_{n+1}\alpha_n-1}{1+2\alpha_n\alpha_{n+1}+\alpha_{n+1}^2\alpha_n^2}\\
&<1+\frac{\alpha_{n+1}\alpha_n+1}{(1+\alpha_n\alpha_{n+1})^2}=\frac{2+\alpha_n\alpha_{n+1}}{1+\alpha_n\alpha_{n+1}}.
\end{split}
\]
Thus,
\[
\frac{1}{4}\delta_n F_n<\frac{1}{4}\pi(1+\frac{1}{\alpha_n\alpha_{n+1}})\cdot\frac{2+\alpha_n\alpha_{n+1}}{1+\alpha_n\alpha_{n+1}}=\frac{\pi}{4}(1+\frac{2}{\alpha_n\alpha_{n+1}}).
\]
Therefore,
\[
\begin{split}
\frac{\delta_n^2}{\pi^2\alpha_n^2\alpha_{n+1}^2}\cdot\frac{1}{4}\delta_n F_n&<\frac{\pi^2(1+\frac{1}{\alpha_n\alpha_{n+1}})^2}{\pi^2\alpha_n^2\alpha_{n+1}^2}\cdot\frac{\pi}{4}(1+\frac{2}{\alpha_n\alpha_{n+1}})\\
&=\frac{\pi}{4}\cdot\frac{1}{(\alpha_n\alpha_{n+1})^2}\cdot(1+\frac{1}{\alpha_n\alpha_{n+1}})^2(1+\frac{2}{\alpha_n\alpha_{n+1}}).
\end{split}
\]
Using the above estimates
\[
\alpha_n\alpha_{n+1}>
\begin{cases}
84.22, & n>1,\\
34.6, & n=1,
\end{cases}
\]
we have
\[
\frac{\delta_n^2}{\pi^2\alpha_n^2\alpha_{n+1}^2}\cdot\frac{1}{4}\delta_n F_n <
\begin{cases}
0.00012, & n>1,\\
0.00080, & n=1.
\end{cases}
\]
Therefore, we obtain the following estimate of $C_n$:
\[
C_n=G_n+\frac{\delta_n^2}{\pi^2\alpha_n^2\alpha_{n+1}^2}\cdot\frac{1}{4}\delta_n F_n <
\begin{cases}
1.83+0.00012=1.83012, & n>1,\\
2.259+0.00080=2.25980, & n=1.
\end{cases}
\]
\end{proof}

\begin{lem}\label{lem:1.5}
For $\theta\in(0,\frac{\pi}{2})$, we have
\[
\sin\theta-\theta\cos\theta<\frac{1}{3}\theta^3.
\]
\end{lem}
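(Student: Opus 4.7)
The plan is to reduce the inequality to the well-known fact that $\sin\theta<\theta$ for $\theta>0$, by introducing the auxiliary function
\[
g(\theta)=\frac{1}{3}\theta^3-\sin\theta+\theta\cos\theta
\]
and showing $g(\theta)>0$ on $(0,\pi/2)$.

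First I would check the boundary value $g(0)=0$, so it suffices to prove that $g$ is strictly increasing on $(0,\pi/2)$. The key observation is that upon differentiating, the $\cos\theta$ terms cancel in a convenient way: $g'(\theta)=\theta^2-\cos\theta+\cos\theta-\theta\sin\theta=\theta(\theta-\sin\theta)$. Since $\theta-\sin\theta>0$ for $\theta>0$, both factors are positive on $(0,\pi/2)$, so $g'>0$ there. Combined with $g(0)=0$, this gives $g(\theta)>0$ throughout $(0,\pi/2)$, which is exactly the desired inequality.

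There is no real obstacle here; the only step requiring thought is spotting the clean factorisation of $g'$. As an alternative route, one could expand both sides in Taylor series to obtain $\sin\theta-\theta\cos\theta=\frac{\theta^3}{3}-\frac{\theta^5}{30}+\frac{\theta^7}{840}-\cdots$ and invoke the Leibniz alternating-series bound, but the monotonicity argument above is shorter and avoids any need to verify that the series terms decrease in absolute value on the whole interval $(0,\pi/2)$.
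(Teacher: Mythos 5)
Your proof is correct and is essentially identical to the paper's: the authors consider $p(\theta)=\sin\theta-\theta\cos\theta-\frac{1}{3}\theta^3$ (the negative of your $g$), compute $p'(\theta)=\theta\sin\theta-\theta^2<0$, and conclude $p(\theta)<p(0)=0$. The sign convention is the only difference.
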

\begin{proof}
Let $p(\theta)=\sin\theta-\theta\cos\theta-\frac{1}{3}\theta^3$, $\theta\in(0,\frac{\pi}{2})$. We have
\[
p'(\theta)=\theta\sin\theta-\theta^2<0,
\]
thus $p(\theta)<p(0)=0$.
\end{proof}

In next section, we will also need the following well-known inequality([2,3]).
\begin{lem}[Wirtinger's inequality]\label{lem:1.6}
Suppose $g\in C^1[a,b]$, $g(a)=g(b)=0$. Then
\[
\int_a^b g^2(t)dt\leqslant (\frac{b-a}{\pi})^2\cdot\int_a^b|g'(t)|^2dt.
\]
\end{lem}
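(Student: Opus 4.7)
The plan is to prove Wirtinger's inequality by reducing to the canonical interval $[0,\pi]$ and then invoking Fourier analysis. First I would normalize the interval: given $g\in C^1[a,b]$ with $g(a)=g(b)=0$, set $\tilde g(t)=g\bigl(a+\tfrac{(b-a)t}{\pi}\bigr)$ for $t\in[0,\pi]$. Then $\tilde g\in C^1[0,\pi]$ with $\tilde g(0)=\tilde g(\pi)=0$, and a direct change of variables shows that the desired inequality for $g$ on $[a,b]$ is equivalent to the normalized statement $\int_0^\pi \tilde g(t)^2\,dt\leqslant\int_0^\pi \tilde g'(t)^2\,dt$, with the factor $\bigl(\tfrac{b-a}{\pi}\bigr)^2$ produced exactly by the Jacobians of the substitution.

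Next I would extend $\tilde g$ to an odd, $2\pi$-periodic function $\tilde g_\ast$ on $\mathbb{R}$. The boundary conditions $\tilde g(0)=\tilde g(\pi)=0$ together with the $C^1$ smoothness of $\tilde g$ on $[0,\pi]$ guarantee that one-sided derivatives of $\tilde g_\ast$ match at every integer multiple of $\pi$, so $\tilde g_\ast\in C^1(\mathbb{R})$. Consequently its Fourier series is a pure sine series $\tilde g(t)=\sum_{n\geqslant 1}b_n\sin(nt)$, converging in $L^2[0,\pi]$, and the $C^1$ regularity of the periodic extension legitimizes termwise differentiation in the $L^2$ sense, giving $\tilde g'(t)=\sum_{n\geqslant 1}n\,b_n\cos(nt)$.

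Parseval's identity then yields
\[
\int_0^\pi \tilde g(t)^2\,dt=\frac{\pi}{2}\sum_{n\geqslant 1}b_n^2,\qquad \int_0^\pi \tilde g'(t)^2\,dt=\frac{\pi}{2}\sum_{n\geqslant 1}n^2 b_n^2,
\]
and since $n^2\geqslant 1$ for every $n\geqslant 1$, the first sum is dominated by the second. Transporting this back through the change of variables produces the stated inequality on $[a,b]$, with equality exactly when only the mode $n=1$ is present, i.e.\ when $\tilde g$ is a multiple of $\sin t$.

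The main obstacle is really just justifying the Fourier apparatus cleanly: one needs both the $L^2$ convergence of the sine series for $\tilde g$ and the identification of $\tilde g'$ with the formally differentiated series. Both follow from the fact that the odd periodic extension is genuinely $C^1$ across the points $n\pi$, which is where the hypothesis $g(a)=g(b)=0$ is essential. An alternative elementary route would be to write $\tilde g(t)=\sin(t)\,h(t)$ with $h$ continuous on $[0,\pi]$ and expand $\int(\tilde g')^2-\tilde g^2$ via integration by parts, but handling $h$'s behaviour at the endpoints $0$ and $\pi$ is more delicate, so the Fourier route is the cleaner option.
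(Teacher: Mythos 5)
Your argument is correct, but note that the paper does not prove this lemma at all: it is quoted as a well-known inequality with references to Hardy--Littlewood--P\'olya and Mitrinovi\'c--Vasi\'c, so there is no in-paper proof to compare against. Your Fourier-analytic route is the standard one and all the essential points are in place: the affine change of variables correctly reduces the claim to $\int_0^\pi \tilde g^2\leqslant\int_0^\pi(\tilde g')^2$ with the factor $\bigl(\tfrac{b-a}{\pi}\bigr)^2$ accounted for; the odd $2\pi$-periodic extension is indeed $C^1$ because the derivative of the odd reflection is the even reflection of $\tilde g'$, which is continuous precisely thanks to $\tilde g(0)=\tilde g(\pi)=0$; and the Parseval identities $\int_0^\pi\tilde g^2=\tfrac{\pi}{2}\sum b_n^2$ and $\int_0^\pi(\tilde g')^2=\tfrac{\pi}{2}\sum n^2b_n^2$ give the inequality from $n^2\geqslant 1$. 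One small remark: the full strength of the $C^1$ periodic extension is not needed to identify the coefficients of $\tilde g'$ as $nb_n$; a single integration by parts, $\int_0^\pi\tilde g'(t)\cos(nt)\,dt=n\int_0^\pi\tilde g(t)\sin(nt)\,dt$, with the boundary terms killed by $\tilde g(0)=\tilde g(\pi)=0$, does the job directly, after which Parseval for the cosine system finishes the argument. Either way, your proof is complete and would serve as a self-contained justification of the lemma the paper takes for granted.
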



\section{H\"{o}lder properties of $f(x)$}

In this section, we shall study H\"{o}lder properties of $f(x)=x\cdot\sin\frac{1}{x}$. Simple calculation gives
\begin{equation}\label{eqn:2.1}
f'(x)=\sin\frac{1}{x}-\frac{1}{x}\cos\frac{1}{x},\quad f''(x)=-\frac{1}{x^3}\sin\frac{1}{x}.
\end{equation}
Thus $f(x)$ is monotone in each interval $[\frac{1}{\alpha_{n+1}},\frac{1}{\alpha_n}]$.

\begin{prop}\label{prop:2.1}
For $x,y\in[\frac{1}{\alpha_{n+1}},\frac{1}{\alpha_n}]$, $(n>1)$, we have
\begin{equation}\label{eqn:2.2}
|f(y)-f(x)|\leqslant\sqrt{2|y-x|}.
\end{equation}
For $x,y\in[\frac{1}{\alpha_2},\frac{1}{\alpha_1}]$, we have
\begin{equation}\label{eqn:2.2-2}
|f(y)-f(x)|\leqslant\sqrt{2.26|y-x|}.
\end{equation}
\end{prop}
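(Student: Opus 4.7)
The plan is to combine Cauchy--Schwarz with Wirtinger's inequality (Lemma \ref{lem:1.6}) on the interval $I_n:=[1/\alpha_{n+1},1/\alpha_n]$. The key structural observation, which explains the shape of the constant $C_n$ in Lemma \ref{lem:1.4}, is that $f'$ itself vanishes at both endpoints of $I_n$: indeed, by \eqref{eqn:2.1} and the definition of $\alpha_n$,
\[
f'\bigl(1/\alpha_n\bigr) = \sin\alpha_n - \alpha_n\cos\alpha_n = \varphi(\alpha_n) = 0,
\]
and likewise at $1/\alpha_{n+1}$. Since $f$ is monotone on $I_n$, for $x<y$ in $I_n$ Cauchy--Schwarz applied to $f(y)-f(x)=\int_x^y f'(t)\,dt$ gives
\[
|f(y)-f(x)|^{2}\leqslant (y-x)\int_x^y |f'(t)|^2\,dt \leqslant (y-x)\int_{1/\alpha_{n+1}}^{1/\alpha_n}|f'(t)|^2\,dt.
\]

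Because $f'\in C^1(I_n)$ vanishes at both endpoints, Wirtinger's inequality applies to $f'$ on $I_n$ and yields
\[
\int_{1/\alpha_{n+1}}^{1/\alpha_n}|f'(t)|^2\,dt \leqslant \Bigl(\tfrac{1/\alpha_n-1/\alpha_{n+1}}{\pi}\Bigr)^2 \int_{1/\alpha_{n+1}}^{1/\alpha_n}|f''(t)|^2\,dt = \frac{(\alpha_{n+1}-\alpha_n)^2}{\pi^2\alpha_n^2\alpha_{n+1}^2}\int_{1/\alpha_{n+1}}^{1/\alpha_n}|f''(t)|^2\,dt.
\]
The prefactor is exactly the leading factor of $C_n$, so the whole argument reduces to computing $\int_{1/\alpha_{n+1}}^{1/\alpha_n}|f''(t)|^2\,dt$. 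Using $f''(t)=-t^{-3}\sin(1/t)$ and the substitution $u=1/t$, this integral becomes $\int_{\alpha_n}^{\alpha_{n+1}} u^{4}\sin^{2}u\,du$.

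The substantive computation is then to evaluate this integral in closed form. Writing $\sin^2 u=\tfrac{1-\cos 2u}{2}$ and integrating $u^4\cos 2u$ by parts four times produces an antiderivative $\Phi(u)$ that is a combination of $u^k\sin 2u$ and $u^k\cos 2u$. The crucial simplification is at the endpoints: the relation $\sin\alpha_k=\alpha_k\cos\alpha_k$ forces $\cos^2\alpha_k=1/(1+\alpha_k^2)$, hence
\[
\sin 2\alpha_k=\frac{2\alpha_k}{1+\alpha_k^2},\qquad \cos 2\alpha_k=\frac{1-\alpha_k^2}{1+\alpha_k^2},
\]
so the boundary values collapse to $\Phi(\alpha_k)=\tfrac{\alpha_k^5}{10}+\tfrac{\alpha_k^3}{4(1+\alpha_k^2)}$. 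A short algebraic manipulation of $\Phi(\alpha_{n+1})-\Phi(\alpha_n)$ (factoring $\alpha_{n+1}-\alpha_n$ from the numerator of $\tfrac{\alpha_{n+1}^3}{1+\alpha_{n+1}^2}-\tfrac{\alpha_n^3}{1+\alpha_n^2}$) gives
\[
\int_{\alpha_n}^{\alpha_{n+1}}u^4\sin^2 u\,du=\frac{1}{10}(\alpha_{n+1}^5-\alpha_n^5)+\frac{1}{4}(\alpha_{n+1}-\alpha_n)\Bigl[1+\frac{\alpha_{n+1}\alpha_n-1}{(1+\alpha_{n+1}^2)(1+\alpha_n^2)}\Bigr],
\]
which combines with the Wirtinger prefactor to yield precisely $C_n$. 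Lemma \ref{lem:1.4} then finishes: $|f(y)-f(x)|^2\leqslant C_n(y-x)\leqslant 2(y-x)$ for $n>1$, and $\leqslant 2.26(y-x)$ for $n=1$.

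The main technical obstacle is the simplification of $\Phi(\alpha_k)$: the raw antiderivative contains seven terms involving $\sin 2\alpha_k$, $\cos 2\alpha_k$, and powers $\alpha_k,\alpha_k^2,\alpha_k^3,\alpha_k^4$, and it is only after carefully exploiting $\cos^2\alpha_k=1/(1+\alpha_k^2)$ that the many cancellations produce the two-term closed form matching $C_n$. Once this clean expression is in hand, the proof is just a concatenation of Cauchy--Schwarz, Wirtinger, the closed-form evaluation, and Lemma \ref{lem:1.4}.
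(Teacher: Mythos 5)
Your proposal is correct and follows essentially the same route as the paper: Cauchy--Schwarz on $\int_x^y f'$, Wirtinger applied to $f'$ (which vanishes at $1/\alpha_n$ and $1/\alpha_{n+1}$ because $\varphi(\alpha_k)=0$), the substitution $u=1/t$ reducing everything to $\int_{\alpha_n}^{\alpha_{n+1}}u^4\sin^2u\,du$, the same closed-form evaluation via $\sin\alpha_k=\alpha_k\cos\alpha_k$, and Lemma \ref{lem:1.4}. The only cosmetic difference is that you simplify the boundary terms directly in $\sin 2\alpha_k,\cos 2\alpha_k$ while the paper passes through $\theta_k$; the resulting expression for the integral, and hence the constant $C_n$, is identical.
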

\begin{proof}
By Cauchy's inequality, for $x,y\in[\frac{1}{\alpha_{n+1}},\frac{1}{\alpha_n}]$ we have
\[
\begin{split}
|f(y)-f(x)|^2&=\biggl|\int_x^y f'(t)dt\biggr|^2\\
&\leqslant |y-x|\cdot\biggl|\int_x^y |f'(t)|^2 dt\biggr|\\
&\leqslant |y-x|\cdot\int_{\frac{1}{\alpha_{n+1}}}^{\frac{1}{\alpha_n}} |f'(t)|^2 dt.\\
\end{split}
\]
Since $f'(\frac{1}{\alpha_{n+1}})=f'(\frac{1}{\alpha_{n}})=0$, by Wirtinger's inequality, we have
\begin{equation}\label{eqn:2.3}
\begin{split}
|f(y)-f(x)|^2&\leqslant |y-x|\cdot\frac{1}{\pi^2}(\frac{1}{\alpha_{n}}-\frac{1}{\alpha_{n+1}})^2\cdot\int_{\frac{1}{\alpha_{n+1}}}^{\frac{1}{\alpha_n}} |f''(t)|^2 dt\\
&=|y-x|\cdot\frac{1}{\pi^2}(\frac{1}{\alpha_{n}}-\frac{1}{\alpha_{n+1}})^2\cdot\int_{\alpha_n}^{\alpha_{n+1}}u^4\sin^2 u du.
\end{split}
\end{equation}
Denote by
\[
I_n=\int_{\alpha_n}^{\alpha_{n+1}}u^4\cdot\sin^2 u\,du.
\]
Direct calculation yields
\[
\begin{split}
I_n&=\frac{1}{10}(\alpha_{n+1}^5-\alpha_n^5)-\frac{1}{4}\bigl[\alpha_{n+1}^4\sin(2\theta_{n+1})-\alpha_{n}^4\sin(2\theta_{n})\bigr]\\
&\quad +\frac{1}{2}\bigl[\alpha_{n+1}^3\cos(2\theta_{n+1})-\alpha_n^3\cos(2\theta_n)\bigr]+\frac{3}{4}\bigl[\alpha_{n+1}^2\sin(2\theta_{n+1})-\alpha_n^2\sin(2\theta_n)\bigr]\\
&\quad -\frac{3}{4}\bigl[\alpha_{n+1}\cos(2\theta_{n+1})-\alpha_n\cos(2\theta_n)\bigr]-\frac{3}{8}\bigl[\sin(2\theta_{n+1})-\sin(2\theta_n)\bigr].
\end{split}
\]
Notice that
\[
\alpha_n\sin(2\theta_n)=\frac{1}{\tan\theta_n}\cdot\sin(2\theta_n)=2\cos^2\theta_n.
\]
Using this, we can write $I_n$ as follows
\[
\begin{split}
I_n&=\frac{1}{10}(\alpha_{n+1}^5-\alpha_n^5)-\frac{1}{4}\bigl[\alpha_{n+1}^3\cdot 2\cos^2\theta_{n+1}-\alpha_{n}^3\cdot 2\cos^2\theta_{n}\bigr]\\
&\quad +\frac{1}{2}\bigl[\alpha_{n+1}^3\cos(2\theta_{n+1})-\alpha_n^3\cos(2\theta_n)\bigr]+\frac{3}{4}\bigl[\alpha_{n+1}\cdot 2\cos^2\theta_{n+1}-\alpha_n\cdot 2\cos^2\theta_n\bigr]\\
&\quad -\frac{3}{4}\bigl[\alpha_{n+1}\cos(2\theta_{n+1})-\alpha_n\cos(2\theta_n)\bigr]-\frac{3}{8}\bigl[\sin(2\theta_{n+1})-\sin(2\theta_n)\bigr]\\
&=\frac{1}{10}(\alpha_{n+1}^5-\alpha_n^5)-\frac{1}{2}\bigl[\alpha_{n+1}^3\sin^2\theta_{n+1}-\alpha_n^3\sin^2\theta_n\bigr]+\frac{3}{4}(\alpha_{n+1}-\alpha_n)\\
&\quad -\frac{3}{8}\bigl[\sin(2\theta_{n+1})-\sin(2\theta_n)\bigr].
\end{split}
\]
Since $\alpha_k=\frac{1}{\tan\theta_k}$, we have
\[
\alpha_k^3\sin^2\theta_k=\alpha_k\cdot\cos^2\theta_k,\qquad\cos^2\theta_k=\frac{\alpha_k^2}{1+\alpha_k^2},\qquad\sin(2\theta_k)=\frac{2\alpha_k}{1+\alpha_k^2}.
\]
Then if let $D=\alpha_{n+1}^3\sin^2\theta_{n+1}-\alpha_n^3\sin^2\theta_n$ and $E=\sin(2\theta_{n+1})-\sin(2\theta_n)$, we will have
\[
\begin{split}
D&=\alpha_{n+1}\cos^2\theta_{n+1}-\alpha_n\cos^2\theta_n\\
&=\frac{\alpha_{n+1}^3}{1+\alpha_{n+1}^2}-\frac{\alpha_n^3}{1+\alpha_n^2}\\
&=(\alpha_{n+1}-\alpha_n)+\frac{(\alpha_{n+1}-\alpha_n)(\alpha_{n+1}\alpha_n-1)}{(1+\alpha_{n+1}^2)(1+\alpha_n^2)}
\end{split}
\]
and
\[
E=\frac{2\alpha_{n+1}}{1+\alpha_{n+1}^2}-\frac{2\alpha_{n}}{1+\alpha_{n}^2}=\frac{2(\alpha_{n+1}-\alpha_n)(1-\alpha_{n}\alpha_{n+1})}{(1+\alpha_{n+1}^2)(1+\alpha_n^2)}.
\]
Substitute them in the last equation of $I_n$, we get
\[
I_n=\frac{1}{10}(\alpha_{n+1}^5-\alpha_n^5)+\frac{1}{4}(\alpha_{n+1}-\alpha_n)\biggl[1+\frac{\alpha_{n+1}\alpha_n-1}{(1+\alpha_{n+1}^2)(1+\alpha_n^2)}\biggr].
\]
By \eqref{eqn:2.3} and Lemma \ref{lem:1.4}, we have
\[
|f(y)-f(x)|^2\leqslant C_n|y-x| \leqslant
\begin{cases}
2|y-x|,& n>1,\\
2.26|y-x|, & n=1.
\end{cases}
\]
for any $x,y\in[\frac{1}{\alpha_{n+1}},\frac{1}{\alpha_n}]$.
\end{proof}

For $n=1$, we now improve the above estimate.

\begin{prop}\label{prop:2.2}
For $x,y\in[\frac{1}{\alpha_2},\frac{1}{\alpha_1}]$, we also have
\begin{equation}\label{eqn:2.5}
|f(y)-f(x)|\leqslant\sqrt{2|y-x|}.
\end{equation}
\end{prop}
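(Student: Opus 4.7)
The plan is to refine Proposition \ref{prop:2.1} for $n=1$ by computing $\int_{1/\alpha_2}^{1/\alpha_1}|f'(t)|^2\,dt$ exactly, rather than bounding it via Wirtinger's inequality. The opening Cauchy--Schwarz step is the same: for $x, y \in [1/\alpha_2, 1/\alpha_1]$,
$$|f(y) - f(x)|^2 \leqslant |y - x| \cdot \int_{1/\alpha_2}^{1/\alpha_1} |f'(t)|^2 \, dt,$$
so the whole task reduces to showing that this $L^2$-integral of $f'$ is at most $2$.

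The key move is an integration by parts. Because $f'(1/\alpha_1) = f'(1/\alpha_2) = 0$, the boundary terms vanish and
$$\int_{1/\alpha_2}^{1/\alpha_1} |f'(t)|^2 \, dt = -\int_{1/\alpha_2}^{1/\alpha_1} f(t)\,f''(t) \, dt.$$
Inserting $f(t) = t\sin(1/t)$ and $f''(t) = -\sin(1/t)/t^3$ from \eqref{eqn:2.1}, the integrand collapses to $\sin^2(1/t)/t^2$, and the substitution $u = 1/t$ turns the whole expression into the elementary trigonometric integral $\int_{\alpha_1}^{\alpha_2} \sin^2 u \, du$.

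What remains is purely arithmetic. Writing
$$\int_{\alpha_1}^{\alpha_2} \sin^2 u \, du = \frac{\alpha_2 - \alpha_1}{2} - \frac{\sin 2\alpha_2 - \sin 2\alpha_1}{4}$$
and using $\alpha_n = n\pi + \pi/2 - \theta_n$, so that $\sin 2\alpha_n = \sin(\pi - 2\theta_n) = \sin 2\theta_n$, the value equals $(\pi + \theta_1 - \theta_2)/2 + (\sin 2\theta_1 - \sin 2\theta_2)/4$. Discarding the positive quantities carrying $\theta_2$, using $\sin 2\theta_1 < 2\theta_1$, and invoking the estimate $\theta_1 < \pi/14$ from the remark after Lemma \ref{lem:1.1}, this is bounded by $\pi/2 + \theta_1 < \pi/2 + \pi/14 = 4\pi/7 < 2$.

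I expect no real obstacle here: the improvement over Proposition \ref{prop:2.1} comes precisely from noticing that $-\int f f''\,dt = \int \sin^2(1/t)/t^2\,dt$ is already a very modest quantity, whereas Wirtinger's inequality replaces it by the considerably larger $\bigl((1/\alpha_1 - 1/\alpha_2)/\pi\bigr)^2\int |f''|^2\,dt$, in which the extra $1/t^6$ inside $|f''|^2$ inflates the integrand significantly. The only place one has to be slightly careful is the final bookkeeping, to make sure the sign signs work out so that the estimates $0 < \theta_2 < \theta_1 < \pi/14$ actually push the answer below $2$.
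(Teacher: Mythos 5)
Your proof is correct, and it takes a genuinely different route from the paper. The paper keeps the Wirtinger-based bound $C_1<2.26$ from Proposition \ref{prop:2.1} and then launches a variational argument: it considers $\varphi(x,y)=(f(x)-f(y))/\sqrt{y-x}$ on $[\frac{1}{\alpha_2},\frac{1}{\alpha_1}]^2$, shows via the boundary derivatives that any maximizer $(x_0,y_0)$ is interior, extracts the critical-point identity $f'(x_0)=f'(y_0)=\frac{1}{2}\frac{f(y_0)-f(x_0)}{y_0-x_0}$, and then bootstraps a chain of numerical bounds on $x_0$ and $y_0$ to contradict the assumption $\varphi(x_0,y_0)\geqslant\sqrt{2}$ --- roughly a page of delicate estimates. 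You instead remove the lossy step at its source: since $f'$ vanishes at both endpoints, integration by parts gives $\int_{1/\alpha_2}^{1/\alpha_1}|f'|^2\,dt=-\int f f''\,dt=\int_{\alpha_1}^{\alpha_2}\sin^2u\,du$ exactly, and with $\sin 2\alpha_n=\sin 2\theta_n$ this evaluates to $\frac{\pi+\theta_1-\theta_2}{2}+\frac{\sin 2\theta_1-\sin 2\theta_2}{4}<\frac{\pi}{2}+\theta_1<\frac{4\pi}{7}<2$; every step checks out (the identity $\int_{\alpha_1}^{\alpha_2}\frac{(\sin u-u\cos u)^2}{u^2}du=\int_{\alpha_1}^{\alpha_2}\sin^2u\,du$ can be confirmed independently since $-\frac{\sin u}{u}\varphi(u)$ is an antiderivative of the difference and $\varphi$ vanishes at the endpoints). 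What your approach buys is considerable: it is shorter, yields the sharper interval constant $4\pi/7\approx 1.795$ rather than merely beating $2$, and since the same computation gives $\int_{\alpha_n}^{\alpha_{n+1}}\sin^2u\,du<\frac{\pi}{2}+\theta_n\leqslant\frac{\pi}{2}+\theta_1<2$ for every $n\geqslant 1$, it would also subsume Proposition \ref{prop:2.1} and render Wirtinger's inequality and Lemma \ref{lem:1.4} unnecessary. What the paper's route buys in exchange is essentially nothing here beyond locating the near-extremal pairs explicitly; your argument is the better one.
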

\begin{proof}
We consider the following function
\[
\varphi(x,y)=\frac{f(x)-f(y)}{\sqrt{y-x}},\quad x,y\in\big[\frac{1}{\alpha_2},\frac{1}{\alpha_1}\big],\ y>x.
\]
By Proposition \ref{prop:2.1}, $0<\varphi(x,y)<\sqrt{2.26}$. Since $\max\limits_{[\frac{1}{\alpha_2},\frac{1}{\alpha_1}]}|f'(x)|=f'(\frac{1}{2\pi})=2\pi$, we also have
\[
\varphi(x,y)\leqslant\frac{2\pi\cdot(y-x)}{\sqrt{y-x}}=2\pi\cdot\sqrt{y-x}.
\]
Suppose $\varphi(x,y)$ attains its maximum at $(x_0,y_0)$, then the above estimate implies that $x_0<y_0$. Using $f'(\frac{1}{\alpha_1})=f'(\frac{1}{\alpha_2})=0$, we have
\[
\partial_x\varphi(\frac{1}{\alpha_2},y)=\frac{1}{2}\cdot\frac{f(\frac{1}{\alpha_2})-f(y)}{(y-\frac{1}{\alpha_2})^{3/2}}>0,\quad\partial_y\varphi(x,\frac{1}{\alpha_1})=-\frac{1}{2}\cdot\frac{f(x)-f(\frac{1}{\alpha_1})}{(\frac{1}{\alpha_1}-x)^{3/2}}<0.
\]
It follows that $(x_0,y_0)$ is an interior point, thus
\[
\partial_x\varphi(x_0,y_0)=\partial_y\varphi(x_0,y_0)=0.
\]
Simple calculation yields
\begin{equation}\label{eqn:2.6}
f'(x_0)=f'(y_0)=\frac{1}{2}\cdot\frac{f(y_0)-f(x_0)}{y_0-x_0}.
\end{equation}
From \eqref{eqn:2.6} one can deduce the following facts
\begin{equation}\label{eqn:2.7}
(\text{i})\ \ x_0<\frac{1}{2\pi}<y_0;\quad (\text{ii})\ \ |f'(x_0)|=|f'(y_0)|\leqslant\pi.
\end{equation}
Using \eqref{eqn:2.7} we can also deduce an upper bound for $x_0$. In fact,
\[
\Bigl|f'(\frac{4}{9\pi})\Bigr|=\biggl|\sin\frac{9}{4}\pi-\frac{9}{4}\pi\cos\frac{9}{4}\pi\biggr|=\frac{\sqrt{2}}{2}(\frac{9}{4}\pi-1)>\pi,
\]
this implies that $x_0<\frac{4}{9\pi}$.

Assume that $\varphi(x_0,y_0)\geqslant\sqrt{2}$, we will derive a contradiction. By \eqref{eqn:2.6} we have
\begin{equation}\label{eqn:2.8}
|f'(x_0)|=|f'(y_0)|\geqslant\frac{1}{\sqrt{2(y_0-x_0)}}\geqslant\frac{1}{|f(x_0)-f(y_0)|}.
\end{equation}
By Lemma \ref{lem:1.3}--\ref{lem:1.4},
\[
\begin{split}
0<y_0-x_0&<\frac{1}{\alpha_1}-\frac{1}{\alpha_2}=\frac{1}{\alpha_1\alpha_2}(\pi+\theta_1-\theta_2)\\
&<\frac{\pi}{\alpha_1\alpha_2}(1+\frac{1}{\alpha_1\alpha_2})<0.1.
\end{split}
\]
From \eqref{eqn:2.8} it follows that
\[
|f'(y_0)|\geqslant\sqrt{5}>1=\Bigl|f'(\frac{2}{3\pi})\Bigr|,
\]
which yields $y_0<\frac{2}{3\pi}$. By \eqref{eqn:2.8} again we have
\[
|f'(y_0)|\geqslant\frac{1}{|f(x_0) - f(y_0)|}\geqslant\frac{1}{x_0+y_0}\geqslant\frac{1}{\frac{4}{9\pi}+\frac{2}{3\pi}}=\frac{9}{10}\pi.
\]
Since
\[
\begin{split}
\Bigl|f'(\frac{1}{\frac{3\pi}{2}+\frac{1}{3}})\Bigr|&=\biggl|-\cos\frac{1}{3}-(\frac{3\pi}{2}+\frac{1}{3})\sin\frac{1}{3}\biggr|\\
&< 1+(\frac{3\pi}{2}+\frac{1}{3})\cdot\frac{1}{3}<\frac{9}{10}\pi,
\end{split}
\]
we have $y_0<\frac{1}{\frac{3\pi}{2}+\frac{1}{3}}$. We can also deduce a lower bound for $x_0$. In fact,
\[
\Bigl|f'(\frac{1}{2\pi+\frac{\pi}{3}})\Bigr|=\biggl|\frac{\sqrt{3}}{2}-(2\pi+\frac{\pi}{3})\cdot\frac{1}{2}\biggr|=\frac{7}{6}\pi-\frac{\sqrt{3}}{2}<\frac{9}{10}\pi,
\]
thus $x_0>\frac{1}{2\pi+\frac{\pi}{3}}$, and $f(x_0)<\frac{1}{2\pi+\frac{\pi}{3}}\cdot\frac{\sqrt{3}}{2}$. It follows that
\[
\begin{split}
|f(y_0)-f(x_0)|&\leqslant y_0+f(x_0)\\
&<\frac{1}{\frac{3\pi}{2}+\frac{1}{3}}+\frac{1}{2\pi+\frac{\pi}{3}}\cdot\frac{\sqrt{3}}{2}\\
&<\frac{1}{\pi}.
\end{split}
\]
Using \eqref{eqn:2.8} again, we have $|f'(x_0)|=|f'(y_0)|>\pi$, which contradicts with \eqref{eqn:2.7}.
\end{proof}

\begin{prop}\label{prop:2.3}
For $x,y\in [\frac{1}{\pi},\infty)$, we have
\begin{equation}\label{eqn:2.9}
|f(y)-f(x)|\leqslant\sqrt{2|y-x|}.
\end{equation}
\end{prop}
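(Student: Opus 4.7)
The plan is to mimic the Cauchy--Schwarz approach of Proposition~\ref{prop:2.1}, replacing Wirtinger's inequality---which is unavailable here since $f'(1/\pi)=\pi\neq 0$ and the interval is unbounded---by the direct observation that the tail integral $K:=\int_{1/\pi}^{\infty}(f'(t))^2\,dt$ is both finite and explicitly computable, with value $\pi/2<2$. Assuming $y>x$, Cauchy--Schwarz applied to $f(y)-f(x)=\int_x^y f'(t)\,dt$ yields
\[
|f(y)-f(x)|^2 \;\leqslant\; (y-x)\int_x^y (f'(t))^2\,dt \;\leqslant\; K(y-x),
\]
and it suffices to prove $K\leqslant 2$. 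The convergence of $K$ is clear, since $f'(t)=\sin(1/t)-(1/t)\cos(1/t)\sim 1/(3t^3)$ as $t\to\infty$.

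To evaluate $K$, I would substitute $u=1/t$, which maps $[1/\pi,\infty)$ bijectively onto $(0,\pi]$ and transforms it into
\[
K \;=\; \int_0^\pi \frac{(\sin u - u\cos u)^2}{u^2}\,du
\;=\; \int_0^\pi \frac{\sin^2 u}{u^2}\,du \;-\; \int_0^\pi \frac{\sin 2u}{u}\,du \;+\; \int_0^\pi \cos^2 u\,du.
\]
The decisive observation is that integration by parts on the first integral---using $u^{-2}\,du = -d(1/u)$ and $d(\sin^2 u)=\sin 2u\,du$---produces exactly $\int_0^\pi (\sin 2u)/u\,du$, the boundary contribution $[-\sin^2 u/u]_0^\pi$ vanishing at both endpoints ($\sin u\sim u$ at $0$ and $\sin\pi=0$). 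The first two pieces therefore cancel, and $K=\int_0^\pi\cos^2 u\,du=\pi/2$, which yields the required bound $|f(y)-f(x)|\leqslant\sqrt{(\pi/2)(y-x)}<\sqrt{2(y-x)}$.

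The only potential obstacle is the explicit computation of $K$, and its crux is precisely that the awkward sine integral $\int_0^\pi (\sin 2u)/u\,du$ (essentially $\operatorname{Si}(2\pi)$) never has to be evaluated in closed form: it is produced on one side of the integration by parts and cancelled by the $\sin^2 u/u^2$ term on the other. This fortunate cancellation is what keeps the argument fully elementary and leaves comfortable room between $\pi/2$ and the target constant $2$.
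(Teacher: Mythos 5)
Your proof is correct, and it takes a genuinely different route from the paper. The paper exploits the concavity of $f$ on $[\frac{1}{\pi},\infty)$ (from $f''(x)=-x^{-3}\sin\frac1x\leqslant 0$ there) to reduce the two-point estimate to the single-point bound $f(x)\leqslant\sqrt{2(x-\frac{1}{\pi})}$ via $f(y)-f(x)\leqslant f(y-x+\frac{1}{\pi})-f(\frac{1}{\pi})$, and then verifies that bound by an elementary three-case analysis ($\pi(x-\frac1\pi)$, then $f(x)<x$, then $f(x)<1$). You instead extend the $L^2$ method of Proposition~\ref{prop:2.1} to the unbounded interval, replacing Wirtinger's inequality by the exact evaluation $\int_{1/\pi}^{\infty}(f'(t))^2\,dt=\int_0^\pi u^{-2}(\sin u-u\cos u)^2\,du=\pi/2$; the integration by parts that cancels $\int_0^\pi u^{-1}\sin 2u\,du$ against $\int_0^\pi u^{-2}\sin^2u\,du$ is correct (the boundary term $-\sin^2 u/u$ vanishes at both ends), and all three split integrands are bounded near $0$, so the splitting is legitimate. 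Your argument buys a sharper local constant, $\sqrt{\pi/2}$ in place of $\sqrt{2}$, on this interval and is arguably cleaner; the paper's concavity argument avoids any integral evaluation and its intermediate inequality \eqref{eqn:2.10} is anchored at the endpoint $\frac1\pi$ in a way that dovetails with how Proposition~\ref{prop:2.4} later handles the transition interval $[\frac{1}{\alpha_1},\frac{1}{\pi}]$. Either proof serves the role this proposition plays in the main theorem.
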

\begin{proof}
From \eqref{eqn:2.1} we see that $f$ is concave in $[\frac{1}{\pi},\infty)$, and $\max_{[\frac{1}{\pi},\infty)}f'=\pi$. First, we prove the following inequality
\begin{equation}\label{eqn:2.10}
f(x)\leqslant\sqrt{2\big(x-\frac{1}{\pi}\big)},\quad\forall\ x\in\big[\frac{1}{\pi},\infty).
\end{equation}
In fact, if $x\in[\frac{1}{\pi},\frac{1}{\pi}+\frac{2}{\pi^2}]$, by the mean value theorem, we have
\[
f(x)=f(x)-f(\frac{1}{\pi})=f'(\xi)\cdot(x-\frac{1}{\pi})\leqslant\pi(x-\frac{1}{\pi})\leqslant\sqrt{2\big(x-\frac{1}{\pi}\big)}.
\]
If $x\in(\frac{1}{\pi}+\frac{2}{\pi^2},\frac{1}{\pi}+\frac{1}{2}]$, then it's easy to verify that
\[
x<\sqrt{2\big(x-\frac{1}{\pi}\big)},
\]
thus
\[
f(x)<x<\sqrt{2\big(x-\frac{1}{\pi}\big)}.
\]
If $x>\frac{1}{\pi}+\frac{1}{2}$, then
\[
f(x)<1<\sqrt{2\big(x-\frac{1}{\pi}\big)}.
\]
Second, let $y>x\geqslant\frac{1}{\pi}$. Since $f$ is concave, we have
\[
0<f(y)-f(x)\leqslant f(y-x+\frac{1}{\pi})-f(\frac{1}{\pi})=f(y-x+\frac{1}{\pi}).
\]
By \eqref{eqn:2.10}, it follows that
\[
0<f(y)-f(x)\leqslant\sqrt{2(y-x)}.
\]
\end{proof}

\begin{prop}\label{prop:2.4}
For $x,y\in[\frac{1}{\alpha_1},\infty)$, we have
\begin{equation}\label{eqn:2.11}
|f(y)-f(x)|\leqslant\sqrt{2|y-x|}.
\end{equation}
\end{prop}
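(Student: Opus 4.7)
My plan is to cover $[1/\alpha_1,\infty)$ by splitting at the inflection point $1/\pi$ and reducing to Proposition \ref{prop:2.3} plus a direct argument on $[1/\alpha_1,1/\pi]$. For $y>x\geqslant 1/\alpha_1$ I would treat three cases.

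\textbf{Case 1} ($x\geqslant 1/\pi$): apply Proposition \ref{prop:2.3} directly.

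\textbf{Case 2} ($y\leqslant 1/\pi$, so $x,y\in[1/\alpha_1,1/\pi]$): on this short interval $f$ is convex with $\max|f'|=f'(1/\pi)=\pi$, and using $\theta_1<\pi/14$ from the Remark after Lemma \ref{lem:1.1} the length satisfies $1/\pi-1/\alpha_1<1/(2\pi)\leqslant 2/\pi^2$. The mean value theorem then gives $|f(y)-f(x)|\leqslant \pi(y-x)\leqslant \sqrt{2(y-x)}$, the last step because $\pi t\leqslant \sqrt{2t}$ for $t\leqslant 2/\pi^2$.

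\textbf{Case 3} (cross-case: $x\in[1/\alpha_1,1/\pi)$, $y\in(1/\pi,\infty)$): this is the main obstacle. I would mimic the contradiction strategy of Proposition \ref{prop:2.2}. Set $\varphi(x,y)=(f(y)-f(x))/\sqrt{y-x}$ on the rectangle $[1/\alpha_1,1/\pi]\times[1/\pi,\infty)$ and assume $\sup\varphi>\sqrt 2$. The boundaries $x=1/\pi$ and $y=1/\pi$ reduce to Cases 1 and 2; the limit $y\to\infty$ sends $\varphi\to 0$; and the edge $x=1/\alpha_1$ requires a one-variable argument exploiting $f'(1/\alpha_1)=0$. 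Thus the supremum is attained at an interior critical point $(x_0,y_0)$, where the first-order conditions force
\[
f'(x_0)=f'(y_0)=\frac{f(y_0)-f(x_0)}{2(y_0-x_0)}.
\]
Since $f'$ is strictly increasing on $[1/\alpha_1,1/\pi]$ and strictly decreasing on $[1/\pi,\infty)$ with peak value $\pi$ at $1/\pi$, such a pair must straddle the inflection point. Writing $c$ for the common slope, the hypothesis $\varphi>\sqrt 2$ becomes $c^2(y_0-x_0)>1/2$, while the crude estimate $f(y_0)-f(x_0)\leqslant 1+\sin\theta_1$ yields $c(y_0-x_0)\leqslant (1+\sin\theta_1)/2$; dividing confines $c$ (and hence $(x_0,y_0)$) to a narrow window. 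Evaluating $f'$ at explicit test points on each side of $1/\pi$, in the spirit of Proposition \ref{prop:2.2}'s use of values like $f'(4/(9\pi))$ and $f'(1/(2\pi+\pi/3))$, should then contradict the required identity.

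The hard part is clearly Case 3: the critical point straddles the inflection at $1/\pi$, so one must simultaneously control $f$ on its convex and concave sides, and the numerical bookkeeping is at least as delicate as in Proposition \ref{prop:2.2}. A naive attempt to simply add the Proposition \ref{prop:2.3} bound $f(y)\leqslant\sqrt{2(y-1/\pi)}$ to an estimate for $|f(x)|$ on $[1/\alpha_1,1/\pi]$ falls short of the sharp constant $\sqrt 2$ (the exponents from Cauchy--Schwarz make the two half-bounds combine at best into $2\sqrt{y-x}$), which is why the contradiction argument, rather than a direct concatenation of Propositions \ref{prop:2.2} and \ref{prop:2.3}, appears necessary.
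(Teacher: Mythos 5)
Your architecture is the same as the paper's: dispose of $x\geqslant\frac{1}{\pi}$ via Proposition \ref{prop:2.3}, and attack the rest by maximizing $\psi(x,y)=\frac{f(y)-f(x)}{\sqrt{y-x}}$, showing the maximizer is an interior critical point where $f'(x_0)=f'(y_0)=\frac{1}{2}\cdot\frac{f(y_0)-f(x_0)}{y_0-x_0}$, and contradicting $\psi\geqslant\sqrt{2}$. (The paper does not isolate your Case 2: it first kills $y\geqslant\frac{4}{\pi}$ with the crude bound $f(y)-f(x)\leqslant 1+\sin\theta_1<\sqrt{6/\pi}$ and then runs the variational argument on the compact region $x\in[\frac{1}{\alpha_1},\frac{1}{\pi}]$, $y\leqslant\frac{4}{\pi}$; your separate convexity argument for $x,y\in[\frac{1}{\alpha_1},\frac{1}{\pi}]$ is correct and a harmless reorganization.)

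The genuine gap is the conclusion of your Case 3. Everything up to ``evaluating $f'$ at explicit test points \dots should then contradict the required identity'' is setup; the contradiction itself is the entire substance of the proposition and does not follow from the two inequalities you actually extract. Writing $c$ for the common slope, your bounds $c^2(y_0-x_0)>\frac{1}{2}$ and $c(y_0-x_0)\leqslant\frac{1}{2}(1+\sin\theta_1)$ give only $c>\frac{1}{1+\sin\theta_1}\approx 0.82$, while the mean value theorem gives $c\leqslant\frac{\pi}{2}\approx 1.57$; that window is far too wide to be closed by one or two test evaluations. The paper needs a genuine two-sided bootstrap: from $c\leqslant\frac{\pi}{2}$ it gets $x_0<\frac{4}{5\pi}$ and $y_0>\frac{13}{8\pi}$, a secant-slope comparison then improves this to $c<1.3$, hence $y_0>\frac{7}{4\pi}$, hence $c<1.2$; in the other direction the assumption $\psi\geqslant\sqrt{2}$ combined with successively sharper bounds on $f(y_0)-f(x_0)$ forces $c\geqslant\sqrt{\pi/8}$, then $c>1$, then $c>\frac{\pi}{2.7}$, then $c>\frac{\pi}{2.6}>1.2$, and only at that final step does the contradiction appear. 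Each link requires a specific verified evaluation of $f'$ (at $\frac{4}{5\pi}$, $\frac{13}{8\pi}$, $\frac{7}{4\pi}$, $\frac{3}{\pi}$, $\frac{2.5}{\pi}$, $\frac{2}{\pi}$, $\frac{0.7}{\pi}$, $\frac{1.9}{\pi}$), and it is not evident a priori that the two chains of estimates meet. Your proposal correctly identifies the mechanism but stops exactly where the work begins, so as written it does not prove the cross-case.
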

\begin{proof}
Consider the function
\[
\psi(x,y)=\frac{f(y)-f(x)}{\sqrt{y-x}},\quad x,y\in\big[\frac{1}{\alpha_1},\infty),\ y>x.
\]
Since $f$ is increasing in $[\frac{1}{\alpha_1},\infty)$, and $\max\limits_{[\frac{1}{\alpha_1},\infty)}f'=f'(\frac{1}{\pi})=\pi$, we have
\[
0<\psi(x,y)\leqslant\pi\cdot\sqrt{y-x}.
\]
If $x\geqslant\frac{1}{\pi}$, by Proposition \ref{prop:2.3} we have $\psi(x,y)\leqslant\sqrt{2}$. If $x\in[\frac{1}{\alpha_1},\frac{1}{\pi})$, $y\geqslant\frac{4}{\pi}$, then
\[
0<f(y)-f(x)\leqslant 1+\frac{1}{\alpha_1}\sin\alpha_1=1+\sin\theta_1<1+\theta_1.
\]
By Lemma \ref{lem:1.1}, it's easy to verify that
\[
(1+\theta_1)^2<\frac{6}{\pi}.
\]
It follows that
\[
0<f(y)-f(x)<\sqrt{\frac{6}{\pi}}\leqslant\sqrt{2(y-x)}.
\]
It remains to consider the case when $x\in[\frac{1}{\alpha_1},\frac{1}{\pi}]$, $x<y\leqslant\frac{4}{\pi}$. Suppose $\psi(x,y)$ attains its maximum at $(x_0,y_0)$. Assume that $\psi(x_0,y_0)\geqslant\sqrt{2}$, we will derive a contradiction. Using $f'(\frac{1}{\alpha_1})=0$, we have
\[
\partial_x\psi(\frac{1}{\alpha_1},y)=\frac{1}{2}\cdot\frac{f(y)-f(\frac{1}{\alpha_1})}{(y-\frac{1}{\alpha_1})^{3/2}}>0.
\]
Thus $(x_0,y_0)$ must be an interior point. It follows that
\begin{equation}\label{eqn:2.12}
f'(x_0)=f'(y_0)=\frac{1}{2}\cdot\frac{f(y_0)-f(x_0)}{y_0-x_0}.
\end{equation}
From \eqref{eqn:2.12} and the mean value theorem, we have
\begin{equation}\label{eqn:2.13}
0<f'(x_0)=f'(y_0)\leqslant\frac{\pi}{2}.
\end{equation}
Using \eqref{eqn:2.13} we can derive an upper bound for $x_0$ and a lower bound for $y_0$. In fact,
\[
f'(\frac{4}{5\pi})=\sin\frac{5}{4}\pi-\frac{5}{4}\pi\cos\frac{5}{4}\pi=-\frac{\sqrt{2}}{2}+\frac{5\pi}{4}\cdot\frac{\sqrt{2}}{2}>\frac{\pi}{2}.
\]
So $x_0<\frac{4}{5\pi}$. Similarly,
\[
f'(\frac{13}{8\pi})=\sin\frac{8}{13}\pi-\frac{8}{13}\pi\cos\frac{8}{13}\pi=\cos\frac{3}{26}\pi+\frac{8}{13}\pi\sin\frac{3}{26}\pi.
\]
Using Taylor's expansion formula, it's easy to get the estimate $f'(\frac{13}{8\pi})>\frac{\pi}{2}$. Thus $y_0>\frac{13}{8\pi}$. We also have
\[
\begin{split}
f(\frac{13}{8\pi})-f(\frac{4}{5\pi})&=\frac{13}{8\pi}\sin\frac{8}{13}\pi+\frac{4}{5\pi}\sin\frac{\pi}{4}\\
&=\frac{13}{8\pi}\cos\frac{3}{26}\pi+\frac{2\sqrt{2}}{5\pi}\\
&<\frac{2.1}{\pi}.
\end{split}
\]
On the other hand,
\[
2.6\cdot(\frac{13}{8\pi}-\frac{4}{5\pi})=\frac{2.145}{\pi},
\]
hence we have
\[
f(\frac{13}{8\pi})-f(\frac{4}{5\pi})<2.6\cdot(\frac{13}{8\pi}-\frac{4}{5\pi}).
\]
By the mean value theorem, we have
\[
\begin{split}
f(y_0)-f(x_0)&=f(y_0)-f(\frac{13}{8\pi})+f(\frac{13}{8\pi})-f(\frac{4}{5\pi})+f(\frac{4}{5\pi})-f(x_0)\\
&=f'(\xi)\cdot(y_0-\frac{13}{8\pi})+f(\frac{13}{8\pi})-f(\frac{4}{5\pi})+f'(\eta)\cdot(\frac{4}{5\pi}-x_0)\\
&\leqslant f'(\frac{13}{8\pi})\cdot(y_0-\frac{13}{8\pi})+2.6\cdot(\frac{13}{8\pi}-\frac{4}{5\pi})+f'(\frac{4}{5\pi})\cdot(\frac{4}{5\pi}-x_0)\\
&<2.6\cdot(y_0-x_0).
\end{split}
\]
By \eqref{eqn:2.12} it follows that $f'(x_0)=f'(y_0)<1.3$. Using this, we can improve the lower bound for $y_0$. In fact,
\[
\begin{split}
f'(\frac{7}{4\pi})&=\sin\frac{4\pi}{7}-\frac{4\pi}{7}\cos\frac{4\pi}{7} \\
                  &=\cos\frac{\pi}{14} + \frac{4\pi}{7}\sin\frac{\pi}{14} >1.3,
\end{split}
\]
so $y_0>\frac{7}{4\pi}$. Now
\[
\begin{split}
f(\frac{7}{4\pi})-f(\frac{4}{5\pi})&=\frac{7}{4\pi}\sin\frac{4}{7}\pi+\frac{4}{5\pi}\sin\frac{\pi}{4}\\
&=\frac{7}{4\pi}\sin\frac{3}{7}\pi+\frac{2\sqrt{2}}{5\pi} \\
&<\frac{2.28}{\pi}.
\end{split}
\]
On the other hand,
\[
2.4\cdot(\frac{7}{4\pi}-\frac{4}{5\pi})=\frac{ 2.28}{\pi},
\]
thus
\[
f(\frac{7}{4\pi})-f(\frac{4}{5\pi})<2.4\cdot(\frac{7}{4\pi}-\frac{4}{5\pi}).
\]
Repeat the above argument, we get
\[
f(y_0)-f(x_0) <2.4\cdot(y_0-x_0).
\]
By \eqref{eqn:2.12} again we have
\begin{equation}\label{eqn:2.14}
f'(x_0)=f'(y_0)<1.2.
\end{equation}

Assume $\psi(x_0,y_0)\geqslant\sqrt{2}$, using \eqref{eqn:2.12} we have
\begin{equation}\label{eqn:2.15}
f'(x_0)=f'(y_0)\geqslant\frac{1}{\sqrt{2(y_0-x_0)}}\geqslant\frac{1}{f(y_0)-f(x_0)}.
\end{equation}
Since $y_0<\frac{4}{\pi}$, by \eqref{eqn:2.15} we have
\[
f'(y_0)\geqslant\sqrt{\frac{\pi}{8}}.
\]
Using this one can improve the upper bound of $y_0$. In fact,
\[
f'(\frac{3}{\pi})=\sin\frac{\pi}{3}-\frac{\pi}{3}\cos\frac{\pi}{3}=\frac{\sqrt{3}}{2}-\frac{\pi}{6}<\sqrt{\frac{\pi}{8}}.
\]
So $y_0<\frac{3}{\pi}$, and $f'(y_0)\geqslant\sqrt{\frac{\pi}{6}}$. By Lemma \ref{lem:1.5}, we have
\[
f'(\frac{2.5}{\pi})=\sin\frac{2}{5}\pi-\frac{2}{5}\pi\cos\frac{2}{5}\pi<\frac{1}{3}(\frac{2}{5}\pi)^3<\sqrt{\frac{\pi}{6}},
\]
thus $y_0<\frac{2.5}{\pi}$. By Lemma \ref{lem:1.1} and Taylor's expansion formula, it's not difficult to verify that
\[
f(y_0)-f(x_0)<\frac{2.5}{\pi}\sin\frac{2}{5}\pi+\sin\theta_1<1.
\]
From \eqref{eqn:2.15} we have $f'(y_0)>1$. It follows that $y_0<\frac{2}{\pi}$, since $f'(\frac{2}{\pi})=1$. Now we estimate $x_0$. Note that
\[
\begin{split}
f'(\frac{0.7}{\pi})&=\sin\frac{10}{7}\pi-\frac{10}{7}\pi\cos\frac{10}{7}\pi\\
&=-\cos\frac{\pi}{14}+\frac{10}{7}\pi\cdot\sin\frac{\pi}{14}.
\end{split}
\]
It's easy to verify that $0<f'(\frac{0.7}{\pi})<1$. Thus $x_0>\frac{0.7}{\pi}$, and
\[
f(y_0)-f(x_0)<\frac{2}{\pi}+\frac{0.7}{\pi}=\frac{2.7}{\pi}.
\]
By \eqref{eqn:2.15} again, we have $f'(y_0)>\frac{\pi}{2.7}> 1.16$. By the mean value theorem, we have the following estimate
\[
\begin{split}
f'(\frac{1.9}{\pi})&\leqslant f'(\frac{2}{\pi})+|f''(\xi)|(\frac{2}{\pi}-\frac{1.9}{\pi})\\
&=1+\frac{0.1}{\pi}\cdot\frac{1}{\xi^3}\sin\frac{1}{\xi}\\
&<1+\frac{0.1}{\pi}\cdot(\frac{\pi}{1.9})^3<1.16.
\end{split}
\]
So $y_0<\frac{1.9}{\pi}$, and
\[
f(y_0)-f(x_0)<\frac{1.9}{\pi}+\frac{0.7}{\pi}=\frac{2.6}{\pi}.
\]
By \eqref{eqn:2.15} again, we have
\[
f'(y_0)>\frac{\pi}{2.6}>1.2,
\]
which contradicts with \eqref{eqn:2.14}.
\end{proof}

Finally, we can prove the following main theorem.

\begin{thm}\label{thm:2.4}
Let $f(x)=x\cdot\sin\frac{1}{x}$, $x\in(0,\infty)$. Then
\[
|f(y)-f(x)|\leqslant\sqrt{2|y-x|},\quad\forall\ x,y\in(0,\infty).
\]
\end{thm}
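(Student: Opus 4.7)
The plan is to combine Propositions 2.1--2.4 by case analysis; assume without loss of generality $0<x<y$. If $x\geqslant 1/\alpha_1$, Proposition 2.4 applies directly. If $x,y$ lie in a single subinterval $[1/\alpha_{n+1},1/\alpha_n]$ of the monotonicity partition, Proposition 2.1 (for $n>1$) or Proposition 2.2 (for $n=1$) applies. What remains is to treat (i) $0<x<y<1/\alpha_1$ with $x,y$ in distinct subintervals of the partition, and (ii) $0<x<1/\alpha_1\leqslant y$; I would handle (i) first and then use it together with Proposition 2.4 to treat (ii).

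The main device for gluing is the following \emph{extremum trick}: if $a<c<b$ with $c$ a local extremum of $f$ and with $f(a)$ and $f(b)$ on the same side of $f(c)$, then, writing $A=f(c)-f(a)$ and $B=f(c)-f(b)$ with $A,B$ of the same sign,
\[
|f(b)-f(a)|=|A-B|\leqslant\max(|A|,|B|)\leqslant\sqrt{2\max(c-a,\,b-c)}\leqslant\sqrt{2(b-a)},
\]
where the middle step invokes the propositions already available on the two halves. Writing $z_k:=1/\alpha_k$, consecutive extrema alternate in sign, so when $x$ and $y$ sit in two subintervals adjacent across $z_n$, both $f(x)$ and $f(y)$ automatically lie on the side opposite $f(z_n)$, and the trick with $c=z_n$ settles this sub-case of (i). Case (ii) is then handled the same way, using $c=z_1$ together with Proposition 2.4 on the right half and case (i) on the left half; a quick check using $|f(x)|\leqslant x\leqslant 1/\alpha_1$ together with Lemma 1.1 confirms that $f(x)$ and $f(y)$ lie on the same side of $f(z_1)$.

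The most delicate sub-case of (i) is when $x\in[z_{n+1},z_n]$ and $y\in[z_{m+1},z_m]$ with $m\leqslant n-2$, so that at least two extrema lie strictly between $x$ and $y$. Here the separation satisfies $y-x\geqslant z_{m+1}-z_n\sim\pi/(\alpha_{m+1}\alpha_n)$, which is relatively large compared with the oscillation amplitude, and the envelope estimate $|f(y)-f(x)|\leqslant x+y\leqslant 1/\alpha_m+1/\alpha_n$ combined with the inequality $(x+y)^2\leqslant 2(y-x)$ closes the case for all but finitely many small indices. I expect this small-index residue to be the main obstacle: for those configurations the envelope bound is too lossy, and I would either iterate the extremum trick through the intermediate extrema $z_k$ (using Propositions 2.1/2.2 on each individual subinterval together with $|f(z_k)|=\cos\theta_k/\alpha_k$) or carry out a direct numerical verification using the quantitative estimates for $\alpha_k$ and $\theta_k$ provided by Lemmas 1.1--1.2.
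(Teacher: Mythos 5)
Your decomposition is genuinely different from the paper's. The paper does not distinguish adjacent from non\--adjacent subintervals at all: it notes that $f(1/\alpha_n)=(-1)^n\sin\theta_n$ with $\sin\theta_n$ strictly decreasing (Lemma \ref{lem:1.3}), so the images of the monotonicity intervals $J_k$ are nested, $f(J_0)\supset f(J_1)\supset\cdots$, and it transports an arbitrary pair $x<y$ to a pair $x',y'$ lying in a \emph{single} $J_m$ with $f(x')=f(x)$, $f(y')=f(y)$ and $y'-x'\leqslant y-x$, after which Propositions \ref{prop:2.1}, \ref{prop:2.2} and \ref{prop:2.4} finish in one line. Your extremum trick is a legitimate substitute for the gluing (the same\--side hypothesis does hold across a single critical point $z_n=1/\alpha_n$, since it is a strict local extremum approached monotonically from both sides), but two of your steps need repair. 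First, the ``quick check'' in case (ii) does not work as written: $|f(x)|\leqslant x\leqslant 1/\alpha_1$ only gives $f(x)\geqslant -1/\alpha_1$, whereas $f(z_1)=-\sin\theta_1>-1/\alpha_1$ because $\sin\theta_1<\tan\theta_1=1/\alpha_1$ by \eqref{eqn:1.4}; so this bound places $f(x)$ on the \emph{wrong} side of nothing useful. The correct reason is that $-\sin\theta_1$ is the global minimum of $f$ on $(0,\infty)$: on each $\overline{J_k}$ the minimum is attained at an endpoint and equals $-\sin\theta_j$ for some odd $j\geqslant 1$, and $\theta_j$ decreases in $j$.

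Second, the ``small\--index residue'' you fear in the non\--adjacent case does not actually exist, although your computation does fail: pairing the worst numerator $x+y\leqslant z_m+z_n$ with the worst denominator $y-x\geqslant z_{m+1}-z_n$ already breaks down at $(n,m)=(3,1)$. But $(x+y)^2/(2(y-x))$ is increasing in $x$ and has no interior maximum in $y$ (its only critical point $y=3x$ is a minimum), so its supremum over the rectangle $[z_{n+1},z_n]\times[z_{m+1},z_m]$ is attained at a corner $(z_n,z_j)$ with $1\leqslant j\leqslant n-1$, where it equals $(\alpha_n+\alpha_j)^2/\bigl(2\alpha_j\alpha_n(\alpha_n-\alpha_j)\bigr)$; using $\alpha_n-\alpha_j>(n-j)\pi$ and $\alpha_j\geqslant\alpha_1>4.49$ this is $<1$ for every $j<n$, so the envelope bound alone settles all non\--adjacent pairs. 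Alternatively, your iterated extremum trick also closes in full generality: taking $c=z_{m+1}$, the same\--side condition is automatic because $|f(x)|\leqslant x<1/\alpha_{m+2}<\sin\theta_{m+1}=|f(z_{m+1})|$ by Lemmas \ref{lem:1.1} and \ref{lem:1.2}, so induction on $n-m$ goes through. Either repair completes your argument; the paper's nesting device sidesteps the entire discussion.
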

\begin{proof}
Define $J_0=[\frac{1}{\alpha_1},\infty)$, $J_n=[\frac{1}{\alpha_{n+1}},\frac{1}{\alpha_n})$, $n=1,2,\ldots$. From \eqref{eqn:2.1} we see that $f$ is monotone in each interval $J_k$, $k=0,1,\ldots$. Moreover,
\[
f(\frac{1}{\alpha_n})=\frac{1}{\alpha_n}\cdot\sin\alpha_n=\cos\alpha_n=(-1)^n\cdot\sin\theta_n.
\]
By Lemma \ref{lem:1.3}, we have
\begin{equation}\label{eqn:2.16}
f(J_0)\supset f(J_1)\supset\cdots\supset f(J_k)\supset\cdots.
\end{equation}
Now suppose $y>x>0$. If $y\in J_k$, then $x\in J_{\ell}$ for some $\ell\geqslant k$. By \eqref{eqn:2.16}, one can also choose $x',y'\in J_m$ for some $\ell\geqslant m \geqslant k$, 
such that $f(x')=f(x)$ and $f(y')=f(y)$. By Propositions \ref{prop:2.1}, \ref{prop:2.2} and \ref{prop:2.4}, we have
\[
|f(y)-f(x)|=|f(y')-f(x')|\leqslant\sqrt{2(y'-x')}\leqslant\sqrt{2(y-x)}.
\]
\end{proof}


\noindent{\bf Acknowledgments :} The second author would like to thank Prof. Youde Wang for his invitation to visit AMSS and his hospitality. Some of computations in this work are completed during the second author's visiting AMSS.


\bigskip

\noindent Jiaqiang Mei\\
Department of Mathematics\\
Nanjing University\\
Jiangsu China 210093\\
meijq@nju.edu.cn\\
\medskip

\noindent Haifeng Xu\\
School of Mathematical Sciences\\
Yangzhou University\\
Jiangsu China 225002\\
hfxu@yzu.edu.cn\\


\end{document}